\documentclass[11pt]{article}

\usepackage{amsmath,amsthm,amsfonts,amssymb,mathrsfs,epsfig,bm}

\renewcommand{\Re}{{\rm I\kern-0.16em R}}

\def\@begintheorem#1#2{\trivlist \item[\hskip \labelsep{\bf #1\ #2}]}
\def\@opargbegintheorem#1#2#3{\trivlist
      \item[\hskip \labelsep{\bf #1\ #2\ (#3)}]}

\newtheorem{proposition}{Proposition}[section] 

\newtheorem{thm}[proposition]{Theorem}
\newtheorem{lemma}[proposition]{Lemma}
\newtheorem{corollary}[proposition]{Corollary}
\newtheorem{example}[proposition]{Example}
\newtheorem{remark}[proposition]{Remark}

\def\e{\hbox {\rm e}}

\def\R{{\bf R}}
\def\R{{\bf R}}

\def\E{{\bf E}}

\def\ri{\right}
\def\le{\left}


\begin{document}

\author{
Paavo Salminen\\{\small \AA bo Akademi University,}
	\\{\small Faculty of Science and Engineering,}
	\\{\small FIN-20500 \AA bo, Finland,} 
	\\{\small \tt phsalmin@abo.fi}
\and
Lioudmila Vostrikova\\
	{\small  Universit\'e d'Angers,}\\
	{\small  D\'epartement de Math\'ematiques,}\\
        {\small F-49045 Angers Cedex 01, France,}\\
        {\small \tt vostrik@univ-angers.fr}
}
\vskip5cm

\title{
On moments of  integral exponential functionals of additive processes} 
\date{}

\maketitle

\begin{abstract} 
\noindent
For real-valued additive  process  $(X_t)_{t\geq 0}$ 
 a recursive equation is derived for the entire positive moments of functionals 
$$
I_{s,t}= \int _s^t\exp(-X_u)du, \quad 0\leq s<t\leq\infty, 
$$ 
in case  the Laplace exponent of $X_t$ exists for positive values of the parameter. 
From the equation emerges
an easy-to-apply sufficient condition for the finiteness of the  
moments. As an application we study  first hit
processes of  diffusions.
\\ \\


\noindent
	{\rm Keywords: independent increments, L\'evy process,
          subordinator,  Bessel process, geometric Brownian motion}
	\\ \\ 
	{\rm AMS Classification:} 60J75, 60J60. 60E10
\end{abstract}
 \eject
\begin{section}{Introduction}\label{s1}

Let $X=(X_t)_{t\geq 0},\, X_0=0,$ be a real valued additive process, i.e., a strong Markov process with  independent increments having c\`adl\`ag sample paths which are continuous in probability  (cf. Sato  \cite{Sa}, p.3). Important examples of additive processes are:

\begin{description}
\item{(a)}\hskip2.5mm Deterministic time transformations of L\'evy processes, that is, if $(L_s)_{s\geq 0}$ is a L\'evy process and $s\mapsto g(s)$ is an increasing continuous function such that $g(0)=0$ then $(L_{g(s)})_{s\geq 0}$ is an additive process. 
\item{(b)}\hskip2.5mm Integrals of deterministic functions with respect to a L\'evy process, that is,  if $(L_s)_{s\geq 0}$ is a L\'evy process and $s\mapsto g(s)$ is a measurable and locally bounded  function then 
$$
Z_t:=\int_0^t g(s)\,dL_s,\quad t\geq 0,
$$
is an additive process.
\item{(c)}\hskip2.5mm First hit processes of one-dimensional diffusions, that is, if $(Y_s)_{s\geq 0}$ is a diffusion taking values in $[0,\infty),$ starting from 0, and drifting to $+\infty$ then 
$$
H_a:=\inf\{ t \, :\, Y_t>a\},\quad a\geq 0,
$$
is an additive process. 
\end{description}
 Of course, L\'evy processes themselves constitute a large and important class of additive processes.

The aim of this paper is  to study integral exponential 
functionals of $X,$ i.e., functionals of the form 
\begin{equation}\label{deft}
I_{s,t}:= \int _s^t\exp(-X_u)du, \quad 0\leq s<t\leq\infty, 
\end{equation}
in particular, the moments of $I_{s,t}.$  We refer also to a companion paper \cite{SV}, where stochastic calculus is used to 
study the Mellin transforms of $I_{s,t}$ when the underlying additive process is a semimartingale with absolutely continuous characteristics. 

The main result of the paper is a recursive equation, see \eqref{recur00} below, 
which generalizes   the formula for L\'evy processes presented in Urbanik \cite{U} 
and Carmona, Petit and Yor \cite{CPY}, 
see also Bertoin and Yor \cite{BY}. 
This formula for L\'evy processes is also displayed below in \eqref{by2}. In Epifani, Lijoi and Pr\"unster \cite{ELP} an extension of the L\'evy process formula to integral functionals up to $t=\infty$ of increasing additive processes is discussed,  and their formula (7) 
can be seen as  a special case of our formula \eqref{pm22} -- as we found out after finishing our work. 
We refer to these papers and also to \cite{SV} for further references and applications, e.g.,  in financial mathematics and statistics.  

In spite of the existing  closely related results we feel that it is worthwhile to provide a more thorough discussion of the topic. We also give new (to our best knowledge) applications of the formulas for first hit processes of diffusions and present explicit results for Bessel processes and geometric Brownian motions.

\end{section}

\begin{section}{Main results} 

Let $(X_t)_{t\geq 0}$ be an additive process and define
 for $0\leq s\leq t\leq\infty$ and $\alpha\geq 0$ 

\begin{equation}
\label{momn}
m^{(\alpha)}_{s,t}:=\E\left(I^\alpha_{s,t}\ri)=\E\left(\left(\int_s^t\e^{-X_u}\,du\right)^\alpha\ri), \quad
  \alpha\geq 0,
\end{equation}
and
$$
m^{(\alpha)}_{t}:=m^{(\alpha)}_{0,t},\qquad
m^{(\alpha)}_{\infty}:=m^{(\alpha)}_{0,\infty}.
$$
In this section we derive  a recursive integral equation for $m^{(\alpha)}_{s,t}$ under the following assumption:

\begin{description}
\item{(A)} {\it 
 \quad${\bf E}( {\rm e}^ {-\lambda X_t})<\infty$  for all $t\geq 0$ and  $\lambda\geq 0.$  
 }
\end{description} 
\noindent 
Under this assumption we define
\begin{equation}
\label{piilap}
\Phi(t;\lambda):=-\log {\bf E}( {\rm e}^ {-\lambda X_t}). 
\end{equation}
Since $X$ is assumed to be continuous in probability it follows that 
$t\mapsto \Phi(t,\lambda)$ is continuous. Moreover, $X_0=0$ a.s. implies that $\Phi(0;\lambda)=0.$  
If $X$ is a L\'evy process satisfying (A) we write (with a slight abuse of the notation)  
\begin{equation}
\label{piilap11}
{\bf E}( {\rm e}^ {- \lambda X_t}) = {\rm e}^{-t\Phi(\lambda)}.
\end{equation}
See Sato
\cite{Sa}  Theorem 9.8, p.52, for properties and the structure of the Laplace  exponent $\Phi$ of the infinitely divisible  distribution $X_t.$

In particular, (A) is valid for increasing additive processes.  Important examples of these are the first hit processes for diffusions (cf. (c) in Introduction). 
Assumption (A) holds also for additive processes of type (a) in Introduction  if the underlying L\'evy process fullfills (A). 

\begin{remark}
If $X$ is a semimartingale with absolutely continuous characteristics, a sufficient condition  for the existence of  the Laplace exponent as in (A)  in terms of the jump measure is given in Proposition 1 in  \cite{SV}.

\end{remark}

By continuity we have  -- from Jensen's inequality -- the following result  

\begin{lemma}\label{lemma1}
Under assumption  {\rm (A)}   $m^{(\alpha)}_{s,t}<\infty$ for all  $0\leq \alpha\leq 1$ and  $0\leq s\leq t<\infty.$ 
\end{lemma}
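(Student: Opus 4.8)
The plan is to establish the case $\alpha=1$ first and then interpolate down to $0\le\alpha\le1$ using Jensen's inequality on a probability space. For $\alpha=1$, I would write, using Tonelli's theorem for the nonnegative integrand $\e^{-X_u}$,
\begin{equation}
\label{plan1}
m^{(1)}_{s,t}=\E\Bigl(\int_s^t\e^{-X_u}\,du\Bigr)=\int_s^t\E\bigl(\e^{-X_u}\bigr)\,du=\int_s^t\e^{-\Phi(u;1)}\,du,
\end{equation}
where the last equality uses assumption (A) (with $\lambda=1$) and the definition \eqref{piilap} of $\Phi$. Since $u\mapsto\Phi(u;1)$ is continuous on the compact interval $[s,t]$, it is bounded there, so the integrand $\e^{-\Phi(u;1)}$ is bounded and the integral in \eqref{plan1} is finite. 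Hence $m^{(1)}_{s,t}<\infty$ for all $0\le s\le t<\infty$.

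For general $\alpha\in[0,1]$, I would normalize: on $[s,t]$ consider the finite measure $du$ of total mass $t-s$ (for $s<t$; the case $s=t$ is trivial since $I_{s,s}=0$), and regard $x\mapsto\int_s^t x(u)\,\frac{du}{t-s}$ as an expectation with respect to the uniform probability measure on $[s,t]$. Since $z\mapsto z^\alpha$ is concave on $[0,\infty)$ for $0\le\alpha\le1$, Jensen's inequality gives pathwise
\begin{equation}
\label{plan2}
\Bigl(\int_s^t\e^{-X_u}\,du\Bigr)^\alpha=(t-s)^\alpha\Bigl(\int_s^t\e^{-X_u}\,\frac{du}{t-s}\Bigr)^\alpha\le(t-s)^\alpha\,\frac{1}{t-s}\int_s^t\e^{-\alpha X_u}\,du,
\end{equation}
and taking expectations and applying Tonelli again,
\begin{equation}
\label{plan3}
m^{(\alpha)}_{s,t}\le(t-s)^{\alpha-1}\int_s^t\E\bigl(\e^{-\alpha X_u}\bigr)\,du=(t-s)^{\alpha-1}\int_s^t\e^{-\Phi(u;\alpha)}\,du,
\end{equation}
which is finite by the same continuity-on-a-compact argument, now applied to $u\mapsto\Phi(u;\alpha)$, which is again continuous and hence bounded on $[s,t]$ by the remarks following \eqref{piilap}. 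This proves the lemma. Alternatively, one can skip the two-step structure and invoke Jensen directly with the full exponent $\alpha$ throughout; I would still prefer to record \eqref{plan1} separately since it is the cleanest case and the bound $\e^{-\Phi(u;\alpha)}$ specializes correctly.

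There is essentially no serious obstacle here; the only points requiring a word of care are the degenerate case $s=t$ (handled by noting $I_{s,s}=0$) and the justification of interchanging expectation and integration, which is immediate from Tonelli since the integrand is nonnegative and jointly measurable (the latter following from the càdlàg, hence measurable, paths of $X$). The substantive input is simply that (A) makes $\Phi(\cdot;\lambda)$ a well-defined continuous function, so that $\e^{-\Phi(\cdot;\lambda)}$ is bounded on compacts — this is exactly what fails when $t=\infty$, which is why the statement is restricted to $t<\infty$.
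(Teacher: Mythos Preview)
Your treatment of the case $\alpha=1$ in \eqref{plan1} is correct and is exactly what the paper has in mind. However, the Jensen step in \eqref{plan2} is applied in the wrong direction. For $0\le\alpha\le1$ the map $z\mapsto z^\alpha$ is \emph{concave}, so Jensen's inequality with respect to the uniform probability on $[s,t]$ reads
\[
\Bigl(\int_s^t\e^{-X_u}\,\frac{du}{t-s}\Bigr)^{\!\alpha}\ \ge\ \frac{1}{t-s}\int_s^t\e^{-\alpha X_u}\,du,
\]
which is the reverse of what you wrote. Thus \eqref{plan2} and hence \eqref{plan3} do not give an upper bound for $m^{(\alpha)}_{s,t}$.

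The fix is immediate and is in fact what the paper intends: apply Jensen to the \emph{probability-space} expectation $\E$ rather than to the time average. Concavity of $z\mapsto z^\alpha$ yields
\[
m^{(\alpha)}_{s,t}=\E\bigl(I_{s,t}^{\,\alpha}\bigr)\le\bigl(\E(I_{s,t})\bigr)^{\alpha}=\bigl(m^{(1)}_{s,t}\bigr)^{\alpha},
\]
and the right-hand side is finite by your own \eqref{plan1}. With this correction your argument is complete and coincides with the paper's (one-line) proof.
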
 

The main result of the paper is given in the next theorem. In the proof we are using similar ideas as in \cite{CPY}. 
 
\begin{thm} 
\label{prop300}
Under assumption (A) it holds  for $ \alpha\geq 1$ and  $0\leq s\leq t< \infty$  that the moments $ m^{(\alpha)}_{s,t}$ are finite and satisfy the recursive equation
\begin{equation}
\label{recur00}
 m^{(\alpha)}_{s,t}=\alpha\int_s^t
 m^{(\alpha-1)}_{u,t}\e^{-\le(\Phi(u;\alpha)-\Phi(u;\alpha-1)\right)}\, du.
\end{equation}
\end{thm}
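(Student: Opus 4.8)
The plan is to start from the definition
$$
m^{(\alpha)}_{s,t}=\E\left(\left(\int_s^t\e^{-X_u}\,du\right)^\alpha\right)
$$
and rewrite the $\alpha$-th power as an iterated integral. For $\alpha\geq 1$ write $\alpha=1+(\alpha-1)$ and use the elementary identity
$$
\left(\int_s^t f(u)\,du\right)^{\alpha}
=\alpha\int_s^t f(u)\left(\int_u^t f(v)\,dv\right)^{\alpha-1}du
$$
valid for nonnegative $f$ (this is just the fundamental theorem of calculus applied to $v\mapsto\left(\int_v^t f\right)^{\alpha}$, whose derivative is $-\alpha f(v)\left(\int_v^t f\right)^{\alpha-1}$). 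Applying this with $f(u)=\e^{-X_u}$ gives
$$
I_{s,t}^{\alpha}=\alpha\int_s^t \e^{-X_u}\,I_{u,t}^{\alpha-1}\,du .
$$
Then I would take expectations and use Tonelli's theorem (everything is nonnegative) to interchange $\E$ and $\int_s^t du$:
$$
m^{(\alpha)}_{s,t}=\alpha\int_s^t \E\left(\e^{-X_u}\,I_{u,t}^{\alpha-1}\right)du .
$$

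The key step is to evaluate $\E\left(\e^{-X_u}\,I_{u,t}^{\alpha-1}\right)$. Here I would exploit the independent-increments structure: write $\e^{-X_v}=\e^{-X_u}\e^{-(X_v-X_u)}$ for $v\geq u$, so that
$$
I_{u,t}=\int_u^t\e^{-X_v}\,dv=\e^{-X_u}\int_u^t\e^{-(X_v-X_u)}\,dv=\e^{-X_u}\,\widetilde I_{u,t},
$$
where $\widetilde I_{u,t}:=\int_u^t\e^{-(X_v-X_u)}\,dv$ depends only on the increments of $X$ after time $u$ and is therefore independent of $X_u$ (using the strong Markov / independent-increments property; a measure-theoretic care with the continuous family of increments is handled by continuity in probability and c\`adl\`ag paths). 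Consequently
$$
\E\left(\e^{-X_u}\,I_{u,t}^{\alpha-1}\right)
=\E\left(\e^{-\alpha X_u}\,\widetilde I_{u,t}^{\,\alpha-1}\right)
=\E\left(\e^{-\alpha X_u}\right)\,\E\left(\widetilde I_{u,t}^{\,\alpha-1}\right).
$$
Now $\E\left(\e^{-\alpha X_u}\right)=\e^{-\Phi(u;\alpha)}$ by \eqref{piilap}. For the other factor, since $(X_v-X_u)_{v\geq u}$ is again an additive process started at $0$, the process $v\mapsto X_{u+v}-X_u$ has the same one-dimensional Laplace exponents as... no, not in general; rather I must be more careful and instead compute $\E\left(\widetilde I_{u,t}^{\,\alpha-1}\right)$ by relating it to $m^{(\alpha-1)}_{u,t}$ after a change of measure. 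The cleanest route: going back one line, instead factor out only what is needed and write
$$
\E\left(\e^{-X_u}\,I_{u,t}^{\alpha-1}\right)
=\E\left(\e^{-X_u}\,\e^{-(\alpha-1)X_u}\,\widetilde I_{u,t}^{\,\alpha-1}\right)
=\e^{-\Phi(u;\alpha)}\;\frac{\E\left(\e^{-(\alpha-1)X_u}\,\widetilde I_{u,t}^{\,\alpha-1}\right)}{\E\left(\e^{-(\alpha-1)X_u}\right)}\cdot\e^{-\Phi(u;\alpha-1)}\cdot\e^{\Phi(u;\alpha-1)}
$$
— this is getting circular, so let me state the honest plan: I introduce the exponential change of measure $d\mathbf{Q}^{(u)}/d\mathbf{P}=\e^{-(\alpha-1)X_u}/\E(\e^{-(\alpha-1)X_u})$ on $\sigma(X_u)$, under which, by independence of increments, $\widetilde I_{u,t}$ has the same law as under $\mathbf{P}$, so that
$$
\E\left(\e^{-(\alpha-1)X_u}\,\widetilde I_{u,t}^{\,\alpha-1}\right)=\E\left(\e^{-(\alpha-1)X_u}\right)\,\E\left(\widetilde I_{u,t}^{\,\alpha-1}\right)=\e^{-\Phi(u;\alpha-1)}\,m^{(\alpha-1)}_{u,t},
$$
where the last equality uses $\widetilde I_{u,t}\stackrel{d}{=}I_{u,t}$ under $\mathbf{P}$ — but \emph{this last distributional identity is false for general additive processes}, since $X_v-X_u$ need not equal $X_{v}$ in law. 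So in fact the correct object is $m^{(\alpha-1)}_{u,t}=\E(I_{u,t}^{\alpha-1})=\E\big((\int_u^t\e^{-X_v}dv)^{\alpha-1}\big)$ and I should \emph{not} strip off $\e^{-X_u}$ at all; instead I directly claim, via independence of $X_u$ from $(X_v-X_u)_{v\ge u}$ and the factorization $I_{u,t}^{\alpha-1}=\e^{-(\alpha-1)X_u}\widetilde I_{u,t}^{\,\alpha-1}$, that
$$
\E\left(\e^{-X_u}I_{u,t}^{\alpha-1}\right)=\E\left(\e^{-\alpha X_u}\right)\E\left(\widetilde I_{u,t}^{\,\alpha-1}\right)=\e^{-\Phi(u;\alpha)}\,\E\left(\widetilde I_{u,t}^{\,\alpha-1}\right),
$$
while on the other hand $m^{(\alpha-1)}_{u,t}=\E\left(\e^{-(\alpha-1)X_u}\widetilde I_{u,t}^{\,\alpha-1}\right)=\e^{-\Phi(u;\alpha-1)}\E\left(\widetilde I_{u,t}^{\,\alpha-1}\right)$, so dividing,
$$
\E\left(\e^{-X_u}I_{u,t}^{\alpha-1}\right)=\e^{-(\Phi(u;\alpha)-\Phi(u;\alpha-1))}\,m^{(\alpha-1)}_{u,t}.
$$
Substituting into the expression for $m^{(\alpha)}_{s,t}$ yields \eqref{recur00}.

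Finiteness of $m^{(\alpha)}_{s,t}$ for all $\alpha\geq1$ then follows by induction on $n=\lceil\alpha\rceil$: Lemma~\ref{lemma1} gives finiteness for $\alpha\in[0,1]$, and the recursion expresses $m^{(\alpha)}_{s,t}$ for $\alpha\in(n-1,n]$ as $\alpha\int_s^t m^{(\alpha-1)}_{u,t}\e^{-(\Phi(u;\alpha)-\Phi(u;\alpha-1))}\,du$; the integrand is continuous in $u$ (the exponent is continuous by continuity of $t\mapsto\Phi(t;\lambda)$, and $u\mapsto m^{(\alpha-1)}_{u,t}$ is continuous/bounded by dominated convergence since $I_{u,t}\le I_{0,t}$ has a finite $(\alpha-1)$-th moment by the inductive hypothesis), hence the integral over the compact interval $[s,t]$ is finite. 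The main obstacle, and the point requiring the most care, is the independence claim: that $X_u$ is independent of the whole \emph{process} $(X_v-X_u)_{v\in[u,t]}$, not merely of each increment $X_v-X_u$ separately, and that the resulting factorization $\E(\e^{-\beta X_u}\,\widetilde I_{u,t}^{\,\gamma})=\E(\e^{-\beta X_u})\,\E(\widetilde I_{u,t}^{\,\gamma})$ is justified (measurability of $\widetilde I_{u,t}$ with respect to the increment $\sigma$-field, plus Tonelli for the nonnegative integrands). This is where the additive (independent-increments) hypothesis and the c\`adl\`ag/continuity-in-probability regularity are genuinely used, and I would spell it out carefully rather than wave at it.
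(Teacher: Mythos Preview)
Your proposal is correct and follows essentially the same route as the paper: the pathwise identity $I_{s,t}^{\alpha}=\alpha\int_s^t \e^{-X_u}I_{u,t}^{\alpha-1}\,du$, the shifted functional $\widetilde I_{u,t}=\e^{X_u}I_{u,t}$, the factorization via independence of $X_u$ and $(X_v-X_u)_{v\ge u}$ yielding $\E(\widetilde I_{u,t}^{\gamma})=m^{(\gamma)}_{u,t}\,\e^{\Phi(u;\gamma)}$, and induction on $\lceil\alpha\rceil$ for finiteness. Your direct appeal to Tonelli (nonnegativity) is in fact a slight streamlining of the paper's Fubini-plus-monotone-convergence argument; just delete the change-of-measure detour you explicitly abandon in the middle, since the final two displayed equalities before the substitution are exactly what is needed.
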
 
\begin{proof} We start with by noting that the function $s:[0,t] \mapsto  I_{s,t}$ is for any $t>0$  continuous and strictly decreasing. Hence,  for $\alpha\geq 1$ 
$$
 \alpha\,\int_0^s I_{u,t}^{\,\alpha - 1} \, d I_{u,t}= \alpha\,\int_{I_{0,t}}^{I_{s,t}} v^{\alpha-1}\,dv= I_{s,t}^{\alpha} -I_{0,t}^{\alpha},
  $$
 where the integral is a pathwise (a.s.) Riemann-Stiltjes integral and the formula for the change of variables (see, e.g.,  Apostol \cite{A}, p. 144) is used. Consequently, from the definition of  $I_{s,t}$ it follows that 
$$
 I_{s,t}^{\alpha} -I_{0,t}^{\alpha} =
 -\alpha\,\int_0^s I_{u,t}^{\,\alpha - 1} \, {\rm e}^{-X_{u-}}\, du= -\alpha\,\int_0^s I_{u,t}^{\,\alpha - 1} \, {\rm e}^{-X_{u}}\, du.
  $$
Introducing the shifted functional $\widehat I_{s,t}$ via
$$
\widehat I_{s,t}:=\int_0^{t-s} {\rm e}^{-(X_{u+s}-X_s)}\, du
$$
we have 
\begin{equation}
\label{shiftedI}
\widehat I_{s,t}={\rm e}^{X_s}\, I_{s,t}={\rm e}^{X_s}\,\int_s^t {\rm e}^{-X_u}\, du,
\end{equation}
and, therefore, 
\begin{equation}
\label{fubini}
 I_{s,t}^{\alpha} -I_{0,t}^{\alpha} =
 -\alpha\,\int_0^s\widehat I_{u,t}^{\,\alpha - 1} \,{\rm e}^{-\alpha X_u}\, du.
\end{equation}
Notice that the independence of  increments  implies that  $\widehat I_{u,t}^{\,\alpha - 1} $ and ${\rm e}^{-\alpha X_u}$ are independent, and, hence, for all $\alpha\geq 1$
\begin{equation}
\label{mean4}
\E\le(\widehat I_{u,t}^{\, \alpha }\ri)= \E\le( I_{u,t}^{\alpha}\ri) /\E\le({\rm e}^{-\alpha X_u}\ri)
\end{equation}
Then evoking Lemma \ref{lemma1} and (\ref{mean4}) yield for $ 0\leq \alpha\leq 1$ and  $0\leq s\leq t<\infty$ 
\begin{equation}
\label{shiftedII} 
\E\big( \widehat I_{s,t}^{\alpha}\big)\leq \E\big( I_{0,t}^{\alpha}\big)/\E\le({\rm e}^{-\alpha X_u}\ri)<\infty.  
\end{equation}
Assume now that $\alpha\in[1,2]$. Taking the expectations in (\ref{fubini}) and applying Fubini's theorem gives  
\begin{equation}
\label{mean1}
\E\le( I_{s,t}^{\alpha} -I_{0,t}^{\alpha}\ri) =
 -\alpha\,\int_0^s\E\le(\widehat I_{u,t}^{\,\alpha - 1}\ri) \,\E\le({\rm e}^{-\alpha X_u}\ri)\, du >-\infty
\end{equation}
where the finiteness follows from (\ref{shiftedII}).
Since  $I_{s,t}\to 0$ a.s. when $s\uparrow t$ we obtain by applying monotone convergence in (\ref{mean1}) 
\begin{equation}
\label{mean2}
\E\le( I_{0,t}^{\alpha}\ri) =
 \alpha\,\int_0^t\E\le(\widehat I_{u,t}^{\,\alpha - 1}\ri) \,\E\le({\rm e}^{-\alpha X_u}\ri)\, du<\infty.
\end{equation}
Putting (\ref{mean1}) and   (\ref{mean2}) together results to the equation
\begin{equation}
\label{mean3}
\E\le( I_{s,t}^{\alpha}\ri) =
 \alpha\,\int_s^t\E\le(\widehat I_{u,t}^{\,\alpha - 1}\ri) \,\E\le({\rm e}^{-\alpha X_u}\ri)\, du.
\end{equation}
Finally, using (\ref{mean4}) and (\ref{shiftedII})  in  (\ref{mean3}) and recalling (\ref{piilap}) yield (\ref{recur00}) for $\alpha\in[1,2].$ Since (\ref{mean4}) is valid for all $\alpha$ and, as just proved, the finiteness holds for $\alpha\in[1,2]$ the proof of (\ref{recur00}) for arbitrary $\alpha>2$ is easily accomplished by induction. 
\end{proof} 

\begin{corollary} \label{corrlevy}
Let $(X_t)_{t\geq 0}$ be a L\'evy process with the Laplace exponent
as in (\ref{piilap11}). Then (\ref{recur00}) with  $s=0$ and $t<\infty$ 
is equivalent to
\begin{equation}
\label{recur2}
 m^{(\alpha)}_{t}=\alpha\,\e^{-t\Phi(\alpha)}\int_0^t
 m^{(\alpha-1)}_{u}\e^{u\Phi(\alpha)}\, du.
\end{equation}
\end{corollary}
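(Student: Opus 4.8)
The plan is to specialise the general recursion \eqref{recur00} (taken with $s=0$) to the Lévy setting and then to perform a single change of variable. The first step is to insert the explicit form of the Laplace exponent: for a Lévy process \eqref{piilap11} gives $\Phi(u;\la)=u\,\Phi(\la)$, so the exponent occurring in \eqref{recur00} simplifies to
$$
\Phi(u;\al)-\Phi(u;\al-1)=u\bigl(\Phi(\al)-\Phi(\al-1)\bigr).
$$

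The second step is to rewrite $m^{(\al-1)}_{u,t}$ using stationarity and independence of the increments. Decomposing $X_v=X_u+(X_v-X_u)$ for $v\ge u$ and substituting $w=v-u$ yields $I_{u,t}=\e^{-X_u}\int_0^{t-u}\e^{-(X_{u+w}-X_u)}\,dw$; since $(X_{u+w}-X_u)_{w\ge0}$ is a copy of $(X_w)_{w\ge0}$ independent of $X_u$, taking $(\al-1)$-th moments gives
$$
m^{(\al-1)}_{u,t}=\E\bigl(\e^{-(\al-1)X_u}\bigr)\,m^{(\al-1)}_{t-u}=\e^{-u\Phi(\al-1)}\,m^{(\al-1)}_{t-u},
$$
all quantities being finite by Theorem~\ref{prop300}. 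Substituting the last two displays into \eqref{recur00} with $s=0$, the factor $\e^{-u\Phi(\al-1)}$ cancels and one obtains $m^{(\al)}_{t}=\al\int_0^t m^{(\al-1)}_{t-u}\,\e^{-u\Phi(\al)}\,du$. The change of variable $w=t-u$ then turns the integral into $\al\,\e^{-t\Phi(\al)}\int_0^t m^{(\al-1)}_{w}\,\e^{w\Phi(\al)}\,dw$, which is exactly \eqref{recur2}. Since every step — the specialisation $\Phi(u;\la)=u\Phi(\la)$, the shift identity for $m^{(\al-1)}_{u,t}$, the cancellation, and the substitution — is reversible, the converse implication holds as well, so \eqref{recur00} (with $s=0$, $t<\infty$) and \eqref{recur2} are indeed equivalent.

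There is no genuine obstacle in this corollary; the only point deserving a word of care is the shift identity for $m^{(\al-1)}_{u,t}$, which relies on both stationarity and independence of the increments — that is, on the Lévy, rather than merely additive, structure — together with the finiteness of the moments guaranteed by Theorem~\ref{prop300}. Once that identity is in place, the statement reduces to the elementary change of variable $w=t-u$.
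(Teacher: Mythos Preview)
Your proof is correct and follows essentially the same route as the paper: specialise $\Phi(u;\la)=u\Phi(\la)$, derive the shift identity $m^{(\al-1)}_{u,t}=\e^{-u\Phi(\al-1)}m^{(\al-1)}_{t-u}$ via stationarity and independence of increments, substitute into \eqref{recur00} with $s=0$, and finish with the substitution $w=t-u$. You are slightly more explicit than the paper about the final change of variable and about the reversibility needed for the ``equivalent'' claim, but the argument is the same.
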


\begin{proof} 
Put $s=0$ in (\ref{recur00}) to obtain
\begin{equation}
\label{recur3}
 m^{(\alpha)}_{t}=\alpha\int_0^t
 m^{(\alpha-1)}_{u,t}\e^{-u\le(\Phi(\alpha)-\Phi(\alpha-1)\right)}\, du.
\end{equation}
Consider
\begin{eqnarray*}
 m^{(\alpha-1)}_{u,t}&=& \E\left(\left(\int_u^t\e^{-X_v}\,dv\right)^{\alpha-1}\ri)
\\
&=&
\E\left(\e^{-(\alpha-1)X_u}\,\le(\int_u^t\e^{-(X_v-X_u)}\,dv\right)^{\alpha-1}\ri)
\\
&=&
\e^{-u\Phi(\alpha-1)}\E\left(\left(\int_0^{t-u}\e^{-(X_{v+u}-X_u)}\,dv\right)^{\alpha-1}\ri)
\\
&=&
\e^{-u\Phi(\alpha-1)}\E\left(\left(\int_0^{t-u}\e^{-X_{v}}\,dv\right)^{\alpha-1}\ri)
\\
&=&
\e^{-u\Phi(\alpha-1)}\, m^{(\alpha-1)}_{t-u}.
\end{eqnarray*}
Subsituting this expression  into  (\ref{recur3}) proves the claim. 
\end{proof}

For positive  integer values on $\alpha$ the recursive equation  (\ref{recur00})  can be solved explicitly to obtain the formula (\ref{pm22}) in the next proposition. However, we offer another proof highlighting  the  symmetry properties present in the expressions of the moments of the exponential functional. 
 
\begin{proposition} \label{productprop}
For $0\leq s\leq t\leq\infty$ and $n=1,2,\dots$ it holds
\begin{eqnarray}
\label{pm22}
&&\quad m^{(n)}_{s,t}
=n! \int_s^{t}dt_1
\int_{t_1}^{t}dt_2 \cdots \\
&&\hskip 1cm
\nonumber
\cdots
\int_{t_{n-1}}^{t}dt_n  \exp\left(-\sum_{k=1}^{n}\left(\Phi(t_{k};n-k+1)-\Phi(t_{k};n-k)\right)\right).
\end{eqnarray}
In particular, $ m^{(n)}_{s,\infty}<\infty$ if and only if the multiple integral on the right hand side of (\ref{pm22}) is finite.
\end{proposition}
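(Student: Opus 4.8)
The plan is to prove \eqref{pm22} by induction on $n$ using the recursive equation \eqref{recur00} from Theorem \ref{prop300} as the engine, after first handling the issue that \eqref{pm22} is stated for $t \le \infty$ while \eqref{recur00} is only established for $t < \infty$. For the base case $n=1$, note that $m^{(1)}_{s,t} = \int_s^t \E(\e^{-X_u})\,du = \int_s^t \e^{-\Phi(u;1)}\,du$, which is exactly the right-hand side of \eqref{pm22} with $n=1$ since $\Phi(u;0) = -\log\E(\e^{0})=0$. For the inductive step with $t<\infty$, I would assume \eqref{pm22} holds for $n-1$ (with the dummy variables and the telescoping-type sum shifted appropriately), substitute the resulting expression for $m^{(n-1)}_{u,t}$ into \eqref{recur00}, and check that the exponential factors combine correctly: the factor $\e^{-(\Phi(u;n)-\Phi(u;n-1))}$ from \eqref{recur00} should merge with the sum $\sum_{k=1}^{n-1}(\Phi(t_k;n-k)-\Phi(t_k;n-k-1))$ coming from the $(n-1)$-fold integral (evaluated at the process restricted to $[u,t]$, hence with exponent parameters running from $n-1$ down to $0$) to produce precisely $\sum_{k=1}^{n}(\Phi(t_k;n-k+1)-\Phi(t_k;n-k))$ with $t_1=u$. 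One must be careful that when writing $m^{(n-1)}_{u,t}$ via the induction hypothesis, the Laplace exponents appearing are those of the process $(X_v)_{v\ge u}$ shifted to start at $0$; by independence and stationarity of increments this is not automatic for a general additive process, so I would instead apply the induction hypothesis directly in the form $m^{(n-1)}_{u,t} = (n-1)!\int_u^t dt_2\cdots\int_{t_{n-1}}^t dt_n \exp(-\sum_{k=2}^{n}(\Phi(t_k;n-k+1)-\Phi(t_k;n-k)))$, which is legitimate since \eqref{pm22} for index $n-1$ with left endpoint $u$ is exactly this, and then multiply by $n\,\e^{-(\Phi(u;n)-\Phi(u;n-1))}$ and integrate $u=t_1$ over $[s,t]$.

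To pass from $t<\infty$ to $t=\infty$, I would use monotone convergence on both sides. The left-hand side $m^{(n)}_{s,t} = \E(I_{s,t}^n)$ increases to $m^{(n)}_{s,\infty} = \E(I_{s,\infty}^n)$ as $t\uparrow\infty$, since $I_{s,t}\uparrow I_{s,\infty}$ pathwise; note $I_{s,\infty}$ may be $+\infty$ but the expectation is still well-defined in $[0,\infty]$. On the right-hand side, the integrand of the $n$-fold integral is nonnegative and the domain of integration $\{s\le t_1\le\cdots\le t_n\le t\}$ increases to $\{s\le t_1\le\cdots\le t_n<\infty\}$, so by monotone convergence the $n$-fold integral over the truncated simplex increases to the $n$-fold integral over the infinite simplex. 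Hence \eqref{pm22} for $t=\infty$ follows from the finite-$t$ case, and the equality holds as an identity in $[0,\infty]$.

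The final assertion, that $m^{(n)}_{s,\infty}<\infty$ if and only if the multiple integral on the right-hand side of \eqref{pm22} with $t=\infty$ is finite, is then immediate from the established equality, since both sides are equal elements of $[0,\infty]$.

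The main obstacle I anticipate is purely bookkeeping: correctly tracking the index shifts in the exponent parameters when invoking the induction hypothesis, and verifying that the single extra factor $\e^{-(\Phi(t_1;n)-\Phi(t_1;n-1))}$ together with the $(n-1)$ telescoping differences reindexes cleanly into the $n$ differences claimed in \eqref{pm22}. A secondary but genuine point requiring care is that \eqref{recur00} is stated only for $0\le s\le t<\infty$, so the induction must be carried out entirely at finite horizon and the extension to $t=\infty$ deferred to the monotone-convergence step described above; one should also confirm that $m^{(n-1)}_{u,t}<\infty$ (guaranteed by Theorem \ref{prop300}) so that all manipulations with finite quantities are valid before the final passage to a possibly infinite limit.
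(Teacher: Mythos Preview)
Your inductive proof via the recursion \eqref{recur00} is correct. The bookkeeping you describe goes through exactly as anticipated: applying the induction hypothesis to $m^{(n-1)}_{u,t}$ with left endpoint $u$ and relabeling the $n-1$ inner variables as $t_2,\dots,t_n$ gives the exponent $-\sum_{k=2}^{n}(\Phi(t_k;n-k+1)-\Phi(t_k;n-k))$, and prepending the factor $\e^{-(\Phi(t_1;n)-\Phi(t_1;n-1))}$ from \eqref{recur00} with $u=t_1$ completes the sum. Your monotone-convergence extension to $t=\infty$ is also sound.

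The paper, however, does not argue by induction. It expands $\bigl(\int_s^t \e^{-X_u}\,du\bigr)^n$ directly as an $n$-fold integral, uses symmetry of the integrand in $(t_1,\dots,t_n)$ to reduce to the ordered simplex (producing the factor $n!$), applies Fubini, and then computes $\E\bigl(\e^{-(X_{t_1}+\cdots+X_{t_n})}\bigr)$ in one shot via the telescoping identity $X_{t_1}+\cdots+X_{t_n}=\sum_{k=1}^n(n-k+1)(X_{t_k}-X_{t_{k-1}})$ together with independence of increments. Indeed, the paper explicitly remarks just before the proposition that \eqref{recur00} \emph{could} be iterated to yield \eqref{pm22}, but chooses the direct argument to highlight the underlying symmetry. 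Your approach is essentially automatic once Theorem~\ref{prop300} is in hand; the paper's approach is self-contained (it does not invoke Theorem~\ref{prop300} at all), makes the origin of the $n!$ and of the telescoping exponent structure transparent in a single step, and avoids the index-shifting you identified as the main bookkeeping burden.
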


\begin{proof} Let $t<\infty$ and consider 
\begin{eqnarray*}
\label{pm2}
\quad m^{(n)}_{s,t}
&=&\E\left(\left(\int_s^t\e^{-X_u}\,du\right)^n\ri)
\\
&=&\E\left(\int_s^t\cdots \int_s^t  \e^{-X_{t_1}-\dots-X_{t_n}}dt_1\dots dt_n\ri)
\\
&=&n!\,\E\left(\int_s^tdt_1\,\e^{-X_{t_1}}\,\int_{t_1}^tdt_2\,\e^{-X_{t_2}}\dots\int_{t_{n-1}}^tdt_n\,\e^{-X_{t_n}}
\right)
\\
&=&n!\int_s^t\,dt_1\int_{t_1}^t\,dt_2\dots\int_{t_{n-1}}^t\,dt_n\E\left(\e^{-(X_{t_1}+\dots+X_{t_n})} 
\right),
\end{eqnarray*}
where, in the third step, we use that
$
(t_{1},t_{2},\cdots,t_{n})\mapsto \e^{-(X_{t_1}+\dots+X_{t_n})}
$
is symmetric. By the independence of the increments 
$$
\E\left(\e^{-\alpha X_t}\ri)=\E\left(\e^{-\alpha (X_t-X_s)-\alpha
X_s} \ri)=\E\left(\e^{-\alpha (X_t-X_s)}\ri)\E\left(\e^{-\alpha
X_s} \ri).
$$ 
Consequently,
$$
\E\left(\e^{-\alpha (X_t-X_s)}\ri)=\E\left(\e^{-\alpha X_t}\ri)/\E\left(\e^{-\alpha
X_s} \ri)=\e^{-(\Phi(t;\alpha)-\Phi(s;\alpha))}.
$$ 
Since, 
$$
X_{t_1}+\dots+X_{t_n}= \sum_{k=1}^n
(n-k+1)\le(X_{t_k}-X_{t_{k-1}}\ri),\quad t_0:=0,
$$
we have 
\begin{eqnarray*}
\label{pm21}
&&\quad m^{(n)}_{s,t}
=n!\int_s^t\,dt_1\int_{t_1}^t\,dt_2\dots
\\
&&\hskip 1cm
\nonumber
\cdots
\int_{t_{n-1}}^{t}dt_n  \exp\left(-\sum_{k=1}^{n}
\le(\Phi(t_k;n-k+1)-\Phi(t_{k-1};n-k+1)\ri)\ri).
\end{eqnarray*}
Applying the initial values given in (\ref{piilap11}) yields the claimed formula (\ref{pm22}). The statement concerning  the finiteness of $m_{s,\infty}^{(n)}$ follows by applying  the monotone convergence theorem as $t\to\infty$ on both sides of (\ref{pm22}).  
\end{proof}



\begin{corollary} \label{corrmom}
Variable $I_\infty$ has all the positive moments if for all $n$
\begin{eqnarray}
\label{finmom}
\int_0^{\infty} {\rm   e}^{-(\Phi(s;n)-\Phi(s;n-1))}\, ds <\infty.
\end{eqnarray}
\end{corollary}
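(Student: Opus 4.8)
The plan is to derive Corollary~\ref{corrmom} directly from Proposition~\ref{productprop} by bounding the $n$-fold iterated integral in \eqref{pm22} by a product of single integrals. The key observation is that the summand in the exponential factorizes along the variables $t_1,\dots,t_n$: by grouping the telescoping terms we can write
\begin{equation*}
\sum_{k=1}^{n}\bigl(\Phi(t_k;n-k+1)-\Phi(t_k;n-k)\bigr)
=\sum_{k=1}^{n}\bigl(\Phi(t_k;k')-\Phi(t_k;k'-1)\bigr),
\end{equation*}
with $k'=n-k+1$ running over $1,\dots,n$, so that the integrand is a product $\prod_{k=1}^{n} g_{n-k+1}(t_k)$ where $g_j(s):=\exp(-(\Phi(s;j)-\Phi(s;j-1)))$. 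Each $g_j$ is bounded by $1$ on $[0,\infty)$ because $\lambda\mapsto\Phi(s;\lambda)=-\log\E(\e^{-\lambda X_s})$ is nondecreasing in $\lambda$ (the function $\lambda\mapsto\E(\e^{-\lambda X_s})$ being nonincreasing); hence $g_j$ is integrable whenever $g_n$ is, and in fact \eqref{finmom} for the given $n$ gives $\int_0^\infty g_n(s)\,ds<\infty$ directly.

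First I would enlarge the domain of integration in \eqref{pm22} with $s=0$, $t=\infty$: since the integrand is nonnegative, replacing the nested region $0\le t_1\le t_2\le\cdots\le t_n$ by the full cube $[0,\infty)^n$ only increases the integral, giving
\begin{equation*}
m^{(n)}_{\infty}\le n!\,\prod_{k=1}^{n}\int_0^\infty g_{n-k+1}(s)\,ds
= n!\,\prod_{j=1}^{n}\int_0^\infty g_{j}(s)\,ds.
\end{equation*}
Next I would argue that each factor is finite: using $0\le g_j\le 1$ together with $\int_0^\infty g_n(s)\,ds<\infty$ from hypothesis \eqref{finmom} (applied with that same index), one needs to see that $\int_0^\infty g_j(s)\,ds<\infty$ for $j<n$ as well. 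This is where the hypothesis "for all $n$" is used: \eqref{finmom} is assumed for every positive integer, so in particular $\int_0^\infty g_j(s)\,ds<\infty$ for each $j\le n$. Therefore the product on the right is finite, $m^{(n)}_\infty<\infty$, and since $n$ was arbitrary, $I_\infty$ has all positive moments.

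The only genuinely delicate point is the algebraic reindexing that turns the $k$-sum in \eqref{pm22} into a product over the $t_k$; once one notices that the $k$-th summand depends on $t_k$ alone and equals $\log g_{n-k+1}(t_k)$, the factorization and the domain-enlargement estimate are routine. I would also remark that the monotone-convergence passage $t\to\infty$ is already contained in the last sentence of Proposition~\ref{productprop}, so no extra limiting argument is needed. One should state explicitly the elementary fact that $\lambda\mapsto\E(\e^{-\lambda X_s})$ is nonincreasing on $[0,\infty)$ (immediate since $X_s$ is real-valued and $\e^{-\lambda x}$ is decreasing in $\lambda$ only for $x\ge 0$ — so care is needed: in general $X_s$ may take negative values and this monotonicity can fail). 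If $X$ is not assumed increasing, then $g_j\le 1$ need not hold, and instead one should bound $g_j$ on a neighbourhood of infinity using \eqref{finmom} for index $j$ directly and absorb the behaviour near $0$ into a finite constant via continuity of $s\mapsto\Phi(s;\lambda)$ and $\Phi(0;\lambda)=0$; this is the step I expect to require the most care in a fully rigorous writeup.
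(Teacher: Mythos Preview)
Your proposal is correct and follows essentially the same route as the paper: enlarge the nested domain in \eqref{pm22} to the full cube $[0,\infty)^n$, factor the integrand as $\prod_{j=1}^n g_j(t_k)$, and conclude finiteness of each factor directly from the hypothesis \eqref{finmom} assumed for every index. The digression about $g_j\le 1$ is unnecessary (as you yourself note, it can fail when $X$ is not increasing) and plays no role in either your argument or the paper's; simply drop it and invoke the hypothesis for each $j\le n$.
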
 
\begin{proof}
 From (\ref{pm22}) we have  
\begin{eqnarray}
\label{finmom0}
m^{(n)}_t\leq n! \prod_{k=1}^n\int_0^{\infty} {\rm   e}^{-(\Phi(s;n)-\Phi(s;n-1))}\, ds.
\end{eqnarray}
The right hand side of (\ref{finmom0})  is finite if (\ref{finmom}) holds. 
Let $t\to\infty$ in (\ref{finmom0}). By monotone convergence, $m^{(n)}_\infty=\lim_{t\to\infty}m^{(n)}_t,$ and the claim is proved. 
\end{proof}

Formula (\ref{by2}) below extends the corresponding formula for subordinators found \cite{U}, see also  \cite{BY}, p.195, for 
L\'evy processes satisfying (A).  It is a straightforward implification of Proposition  \ref{productprop}.

\begin{corollary} 
\label{cor32}
Let $(X_t)_{t\geq 0}$ be a L\'evy process with the Laplace exponent
as in (\ref{piilap11}) and define  $n^*:=\min\{n\in\{1,2,\dots\}\,:\, \Phi(n)\leq 0\}.$ 
Then
\begin{equation}
\label{by2}
m^{(n)}_\infty:={\bf E}(I_{\infty}^n)=
\begin{cases}
{\displaystyle \frac{n!}{\prod _{k=1}^n \Phi(k)}}, &\mbox{if }\ n <n^*,
\\
+\infty, &\mbox{if }\ n \geq n^*. 
\end{cases}
\end{equation}.
\end{corollary}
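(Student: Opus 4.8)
The plan is to derive \eqref{by2} as a direct specialization of Proposition \ref{productprop} to the L\'evy case with $s=0$ and $t=\infty$. First I would write down \eqref{pm22} in the L\'evy setting: using \eqref{piilap11} we have $\Phi(t_k;m)=t_k\Phi(m)$, so the summand $\Phi(t_k;n-k+1)-\Phi(t_k;n-k)$ becomes $t_k\bigl(\Phi(n-k+1)-\Phi(n-k)\bigr)$. Relabel the index via $j=n-k+1$, so that $k=1,\dots,n$ corresponds to $j=n,n-1,\dots,1$, and the exponent in \eqref{pm22} turns into $-\sum_{k=1}^n t_k\bigl(\Phi(n-k+1)-\Phi(n-k)\bigr)$. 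The key combinatorial step is then to recognize that, writing $c_k:=\Phi(n-k+1)-\Phi(n-k)$ for $k=1,\dots,n$, the iterated integral
\begin{equation*}
n!\int_0^\infty dt_1\int_{t_1}^\infty dt_2\cdots\int_{t_{n-1}}^\infty dt_n\,\exp\Bigl(-\sum_{k=1}^n c_k t_k\Bigr)
\end{equation*}
evaluates, whenever all partial sums $c_n+c_{n-1}+\dots+c_{k}=\Phi(n-k+1)$ are strictly positive for $k=1,\dots,n$, to $n!\big/\prod_{k=1}^n\Phi(k)$.

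The computation of this nested integral is the technical heart of the argument, but it is routine: integrating from the inside out, the $t_n$-integral over $[t_{n-1},\infty)$ gives $\mathrm{e}^{-c_n t_{n-1}}/c_n$ provided $c_n=\Phi(1)>0$; then the $t_{n-1}$-integral gives a factor $1/(c_{n-1}+c_n)=1/\Phi(2)$ provided $\Phi(2)>0$; and inductively the $t_k$-integral contributes $1/(c_k+c_{k+1}+\dots+c_n)=1/\Phi(n-k+1)$ provided $\Phi(n-k+1)>0$. Collecting the factors over $k=1,\dots,n$ reproduces exactly $\prod_{j=1}^n\Phi(j)$ in the denominator, yielding the first branch of \eqref{by2} for $n<n^*$ (since $n<n^*$ means $\Phi(j)>0$ for all $j\le n$, so every integration above is legitimate and finite).

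For the second branch, suppose $n\ge n^*$, so that $\Phi(n^*)\le 0$. Here I would argue that the multiple integral in \eqref{pm22} diverges, whence by the ``if and only if'' clause of Proposition \ref{productprop} we get $m_\infty^{(n)}=+\infty$. Concretely, performing the inner integrations over $t_n,t_{n-1},\dots$ as above is valid only as long as we encounter strictly positive partial sums; the first time we reach the integration over the variable $t_k$ with $k=n-n^*+1$, the relevant partial sum is $c_k+\dots+c_n=\Phi(n^*)\le 0$, and the integral $\int^\infty \mathrm{e}^{-\Phi(n^*)t}\,dt$ diverges (the integrand is bounded below by a positive constant). One must be a little careful to present this cleanly — the outer integrations have not yet been done — so I would instead bound \eqref{pm22} from below by restricting the first $n-n^*$ integration variables to a fixed compact set (e.g.\ $t_1,\dots,t_{n-n^*}\in[0,1]$ with $t_1<\dots<t_{n-n^*}$), on which the corresponding part of the exponential is bounded below by a positive constant, and then the remaining $n^*$-fold integral over $t_{n-n^*+1}<\dots<t_n$ in $[1,\infty)$ is seen to be infinite by the same telescoping computation, now hitting the factor $\int_1^\infty \mathrm{e}^{-\Phi(n^*)t}\,dt=\infty$.

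The main obstacle I anticipate is purely bookkeeping: matching the index shift between the ``$k$'' running in \eqref{pm22} and the ``$j=n-k+1$'' that labels $\Phi(j)$, and making sure the telescoping of partial sums $\sum_{i\ge k}c_i=\Phi(n-k+1)$ is stated correctly so that the product in the denominator comes out as $\prod_{k=1}^n\Phi(k)$ rather than some permutation or off-by-one variant. There is no genuine analytic difficulty, since Proposition \ref{productprop} already supplies the exact integral representation together with the finiteness criterion; everything reduces to evaluating an explicit iterated exponential integral and reading off when it is finite.
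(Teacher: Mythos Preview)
Your proposal is correct and follows exactly the route the paper indicates: the paper does not write out a proof at all but simply states that \eqref{by2} is a ``straightforward implification'' of Proposition~\ref{productprop}, and your telescoping evaluation of the iterated exponential integral (together with the lower bound for the divergent case) is precisely the computation that substantiates this claim.
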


\begin{example} 
\label{DuYo}
{\rm A much studied functional is obtained when taking $X=(X_t)_{t\geq 0}$ with $X_t=\sigma W_t+\mu t,\, \sigma >0, \mu>0,$  where $(W_t)_{t\geq 0}$ is a standard Brownian motion. In   Dufresne \cite{D} and
Yor \cite{Y} (see also Salminen and Yor \cite{SY}) it is proved that}
\begin{equation}
\label{d-y}
I_\infty:=\int_0^\infty  \exp(-(\sigma W_s+\mu s))\, 
ds
\quad\sim\quad 
H_0^{(\delta)},
\end{equation}
{\rm where $H_0^\delta$ is the first hitting time of 0 for a Bessel process of dimension
$\delta=2(1-(\mu/\sigma^2))$ starting from $\sigma/2,$  and $\sim$ means "is identical in law with". In particular, 
it holds}
\begin{equation}
\label{d-y2}
\int_0^\infty  \exp(-(2W_s+\mu s))\,
ds
\quad
\sim
\quad
\frac{1}{2\,Z_\mu},
\end{equation}
{\rm where $Z_\mu$ is a gamma-distributed random variable with rate 1 and
shape $\mu/2.$ We refer to \cite{D} for a discussion
showing how the functional on the left hand side of (\ref{d-y}) arises
as the present value of a perpetuity in a discrete model after a limiting procedure. 
Since the L\'evy exponent in this case is }
$$
\Phi(\lambda)=\lambda \mu-\frac 12\lambda^2\sigma^2,
$$
{\rm the criterium in Corollary \ref{cor32} yields}
$$
\E\left(I_\infty^n\right)<\infty\quad \Leftrightarrow\quad n<2\mu/\sigma^2,
$$ 
{\rm which readily can also be checked from (\ref{d-y2}). }
\end{example}


\end{section}
\begin{section}{First hit processes of one-dimensional diffusions}
We recall first some facts concerning the first hitting times of one-dimensional (or linear)
diffusions. Let now  $Y=(Y_s)_{s\geq 0}$ be a linear diffusion
taking values in an interval $I.$ 
To fix ideas assume that $I$ equals $\R$ or
$(0,\infty)$ or $[0,\infty)$ and that
\begin{equation}
\label{280}
\limsup_{s\to\infty}Y_s=+\infty\quad {\rm a.s.} 
\end{equation}
Assume  $Y_0=v$ and consider for $a\geq v$  the first hitting time
$$
H_a:=\inf\{s\,:\, Y_s>a\}.
$$
Defining $X_t:=H_{t+v},\, t\geq 0,$ it is easily seen -- since $Y$ is a strong Markov process -- that
$X=(X_t)_{t\geq 0}$ is an increasing purely discontinuous additive
process starting from 0. Moreover, from \eqref{280} it follows 
that $X_t<\infty$ a.s. for all $t.$ The process $X$ satisfies (A) and it holds 
\begin{equation}
\label{290}
{\bf E}_v({\rm e}^{-\beta X_t})={\bf E}_v({\rm e}^{-\beta H_{t+v}})=
\frac{\psi_{\beta}(v)}{\psi_{\beta}(t+v)},\quad t\geq 0,
\end{equation}
where $\beta\geq 0,$ ${\bf E}_v$ is the expectation associated with  $Y,\, Y_0=v,$
and $\psi_{\beta}$ is a unique (up to a multiple) positive and increasing
solution (satisfying appropriate boundary conditions) of the ODE
\begin{equation}
\label{2901}
(Gf)(x)=\beta f(x),
\end{equation}
where  $G$ denotes the differential operator associated with $Y. $ For details about diffusions
  (and further references), see It\^o and McKean \cite{IMcK}, and  \cite{BS}. 
The  Laplace transform of
$X_t$ can also be represented as follows
\begin{equation}
\label{291}
{\bf E}_v({\rm e}^{-\beta X_t})=
\exp \left( -\int_v^{t+v}
S(du)\int_0^{\infty}(1-{\rm e}^{-\beta x}) n(u,dx)\right),
\end{equation}
where  $S$ is the scale function, and $n$ is a kernel such that
for all $v\in I$ and $t\geq 0$
$$
\int_v^{t+v}\int_0^\infty (1\wedge x)n(u,dx)S(du)<\infty.
$$ 
\noindent
Representation \eqref{291} clearly reveals the structure of $X$ as a
process with independent increments. From \eqref{290} and \eqref{291}
we may conclude that 
\begin{equation}
\label{30}
\int_0^{\infty}(1-{\rm e}^{-\beta\,x})n(u,dx)=  \lim _{w\rightarrow u-}
\frac{1- {\bf E}_w({\rm e}^{-\beta X_u})}{S(u)-S(w)}.
\end{equation}

We now  pass to present examples of  exponential functionals of   
first hit processes. Firstly, we study Bessel
processes satisfying \eqref{280} and show, in particular, that the exponential
functional of the first hit process has all the moments. In our second
example it is seen that  the exponential
functional of the first hit process of geometric
Brownian motion has only finitely many moments depending on the values
of the parameters.
\begin{example} {\bf Bessel processes.}
\rm\  Let $Y$ be a Bessel process starting from $v>0$.  The
differential operator associated with $Y$ is given by
$$(Gf)(x) = \frac{1}{2}f''(x) + \frac{\delta-1}{2x} f'(x),\quad x>0,$$
where $\delta\in\R$ is called the dimension parameter. From
\cite{BS} we extract the following information
\begin{description}
\item{$\bullet$}\hskip2.5mm for $\delta\geq 2$ the boundary point $0$ is entrance-not-exit and
\eqref{280} holds,
\item{$\bullet$}\hskip2.5mm  for $0<\delta<2$  the boundary point $0$ is non-singular
and \eqref{280} holds when the boundary condition at $0$ is reflection,
\item{$\bullet$}\hskip2.5mm for $\delta\leq 0$ \eqref{280} does not hold.
\end{description}
 In case when \eqref{280} is valid
the Laplace exponent for the first hit process $X=(X_t)_{t\geq 0}$ is given for $v>0$ and $t\geq 0$ by
\begin{equation}
\label{292}
{\bf E}_v({\rm e}^{-\beta X_t}) = \frac{\psi_{\beta}(v)}{\psi_{\beta}(t)}=
 \frac{v^{1-\delta/2}\,I_{\delta/2-1}(v\sqrt{2\beta})}{t^{1-\delta/2}\,I_{\delta/2-1}((t+v)\sqrt{2\beta})},
\end{equation}
where ${\bf E}_v$ is the expectation associated with $Y$ when started
from $v$ and  $I$ denotes the modified Bessel function of the first kind. For simplicity, we wish to study the exponential functional of $X$ when $v=0.$ To find the Laplace exponent when $v=0$ we let $v\to 0$ in (\ref{292}). For this, recall 
that for $p\not= -1,-2,\dots$
\begin{equation}
\label{293}
I_{p}(v)\simeq \frac{1}{\Gamma(p+1)}\left(\frac v2\right)^p\quad {\rm as\ }
v\to 0.
\end{equation} 
Consequently, 
\begin{eqnarray}
\label{294}
\nonumber
{\bf E}_0({\rm e}^{-\beta X_t}) &=& \lim_{v\to 0}{\bf E}_v({\rm
  e}^{-\beta X_t})
\\
\nonumber
&=&
  \frac{1}{\Gamma(\nu+1)}\left(\frac{\sqrt{2\beta}}{2}\right)^{\delta/2-1}\frac{t^{\delta/2-1}}{I_{\delta/2-1}(t\sqrt{2\beta})}
\\
\nonumber 
&=:&{\rm e}^{-\Phi(t;\beta)}.
\end{eqnarray}
The validity of \eqref{finmom}, that is, the finiteness of the
positive moments, can now be checked by exploiting the
asymptotic behaviour of $I_p$ saying that 
for all $p\in\R$ (see Abramowitz and Stegun \cite{AS}, 9.7.1 p.377) 
\begin{equation}
\label{2933}
I_{p}(t)\simeq {\rm e}^{t}/\sqrt{2\pi t}\quad {\rm as\ } t\to\infty.
\end{equation}
Indeed, for  $n=1,2,\dots$ 
 \begin{eqnarray*}
{\rm e}^{-(\Phi(t;n)-\Phi(t;n-1))}&=&\frac{ n^{\delta/2-1}}{I_{\delta/2-1}(t\sqrt{2n})}\frac{I_{\delta/2-1}(t\sqrt{2(n-1)})}{(n-1)^{\delta/2-1}}
\\
&\simeq&
\left(\frac{n}{n-1}\right)^{\delta/2-1}\left(\frac{n}{n-1}\right)^{1/4}\,{\rm e}^{-t(\sqrt{2n}-\sqrt{2(n-1)})},
\end{eqnarray*}
which clearly is integrable at $+\infty.$ Consequently, by Corollary
\ref{corrmom},   the integral
functional 
$$
\int_0^\infty {\rm e}^{-X_t}\, dt
$$
has all the (positive) moments. 
\end{example}

\begin{example} {\bf Geometric Brownian motion.}\rm \
Let $Y=(Y_s)_{s\geq 0}$ be a geometric Brownian motion with parameters
 $\sigma^2>0$ and $\mu\in\R$, i.e.,
$$Y_s= \exp\big( \sigma W_s+(\mu-\frac12 \sigma^2)s\big)$$
where  $W=(W_s)_{s\geq 0}$ is a standard Brownian motion initiated at
0. Since $W_s/s\to 0$ a.s. when $s\to\infty$ it follows 
\begin{description}
\item{$\bullet$}\hskip2.5mm    $\lim_{s\to\infty}Y_s=+\infty$ a.s if $\mu>\frac12 \sigma^2,$
\item{$\bullet$}\hskip2.5mm   $\lim_{s\to\infty}Y_s=0$ a.s if $\mu<\frac12 \sigma^2.$
\item{$\bullet$}\hskip2.5mm   $\limsup_{s\to\infty}Y_s=+\infty$ and
$\liminf_{s\to\infty}Y_s=0$ a.s. if
$\mu=\frac12 \sigma^2.$  
\end{description}
 Consequently, condition
\eqref{280} is valid if and only if $\mu\geq\frac12 \sigma^2.$
Since $Y_0=1$ we consider the first hitting times of points  $a\geq 1.$ Consider 
 \begin{eqnarray*}
H_a&:=& \inf\{s\,:\, Y_s=a\}
\\
&=& \inf\big\{s\,:\, \exp\big( \sigma W_s+(\mu-\frac12 \sigma^2)s\big)=a\big\}
\\
&=& \inf\big\{s\,:\,  W_s+\frac{\mu-\frac12 \sigma^2}{\sigma}\,s=\frac{1}{\sigma}\log a\big\}.
\end{eqnarray*}
We assume now that  $\sigma>0$ and $\mu\geq\frac12 \sigma^2.$  Let
$\nu:=(\mu-\frac12 \sigma^2)/\sigma.$ Then $H_a$  is identical
in law with the first hitting time of $\log a/\sigma$ for Brownian
motion with drift $\nu\geq 0$ starting from 0. Consequently, letting $X_t:=H_{1+t}$ we have for
$t\geq 0$
 \begin{eqnarray}
\label{295}
\nonumber
{\bf E}_1({\rm e}^{-\beta X_t}) &=&
\frac{\psi_{\beta}(0)}{\psi_{\beta}({\log(1+t)}/{\sigma})}
\\
\nonumber 
&=&
\exp\left( -\left(\sqrt{2\beta +\nu^2}-\nu\right)\frac{\log(1+t)}{\sigma}\right) 
\\
 &=& (1+t)^{ -\left(\sqrt{2\beta +\nu^2}-\nu\right)/\sigma},
\\
\nonumber 
 &=:& \exp\left( -\Phi(t;\beta)\right).
\end{eqnarray}
where ${\bf E}_1$ is the expectation associated with $Y$ when started
from 1 and 
$$
\psi_{\beta}(x)= \exp\left( \left(\sqrt{2\beta +\nu^2}-\nu\right)x\right)
$$
is the increasing fundamental solution for Brownian motion with drift
(see \cite{BS} p.132). Notice that the additive process $X$ is a deterministic time change of the first hit process of Brownian motion with drift, which is a subordinator. 
We use now  Proposition  \ref{productprop} to study the moments of the perpetual integral
functional
$$
 I_\infty=\int_0^\infty {\rm e}^{-X_s}\, ds.
$$
To simplify the notation (cf. (\ref{295})) introduce 
$$
\rho(\beta):=\big(\sqrt{2\beta +\nu^2}-\nu\big)/\sigma.
$$
By formula \eqref{pm22} the $n$th moment is given by 
\begin{eqnarray}
\label{299}
\nonumber 
&&\quad{\bf E_1}(I_{\infty}^n)= n! \int_0^{\infty}dt_1 \,(1+t_1)^{-(\rho(n)-\rho(n-1))}
 \int_{t_1}^{\infty}dt_2 \, (1+t_2)^{-(\rho(n-1)-\rho(n-2))}
 \\
&&\hskip 4cm
\nonumber
\times
\int_{t_2}^{\infty}dt_3 \cdots\int_{t_{n-1}}^{\infty}dt_n\,  (1+t_n)^{-\rho(1)}
\\
\nonumber
&&\hskip1.8cm=
\begin{cases}
{\displaystyle \frac{n!}{\prod_{k=1}^n(\rho(k)-k)}}, &\mbox{if }\ n <n^*,
\\
&\\
+\infty, &\mbox{if }\ n \geq n^*, 
\end{cases}
\end{eqnarray}
where   
$$
n^*:=\min\{n\in\{1,2,\dots\}\,:\, \rho(n)-n\leq 0\}.
$$ 
Condition (\ref{finmom}) in  Corollary
\ref{corrmom} takes in this case  the form
 \begin{equation}
\label{297}
\rho(n)-\rho(n-1)>1.
\end{equation}
This being a sufficient condition for the finiteness of $m^{(n)}_\infty$ we have  
 \begin{equation}
\label{2971}
\rho(n)-\rho(n-1)>1\quad \Rightarrow \quad \rho(n)- n>0.
\end{equation}
Consider now the case $\nu=0.$ Then
\begin{equation}
\label{302}
\rho(n)>n\quad \Leftrightarrow\quad n<\frac 2{\sigma^2},
\end{equation}
i.e.,  smaller the volatility (i.e. $\sigma$) more moments of $I_\infty$
exist, as expected. Moreover, in this case
 \begin{eqnarray}
\label{303}
\nonumber 
\rho(n)-\rho(n-1)>1\ && \Leftrightarrow\quad \sqrt{2n}+
\sqrt{2(n-1)}<\frac 2{\sigma}\\
&& \Leftrightarrow\quad 2n-1 + \sqrt{4n(n-1)}<\frac 2{\sigma^2}
 \end{eqnarray}
showing, in particular, that when  $\sigma$ is ``small''  there exist
"many" $n$ satisfying (\ref{302}) but not  (\ref{303}).

\end{example}
\end{section}

\bigskip
\noindent
{\bf Funding.} This research was partially supported by Defimath project of the Research Federation of "Math\' ematiques des Pays de la Loire", by PANORisk project "Pays de la Loire" region, and by the Magnus Ehrnrooth Foundation, Finland. 

\bibliographystyle{plain}

\begin{thebibliography}{}

\end{thebibliography}


\begin{thebibliography}{99}
\addcontentsline{toc}{chapter}{Reference}
\bibitem{AS}
{Abramowitz, M.,  and Stegun, I. :}
{\em Mathematical Functions, 9th printing},
{Dover publications, Inc., {New York}}, 1970.
\bibitem{A}{Apostol, T.M.:} {\em Mathematical Analysis, 2nd ed.}, Addison Wesley Longman , Reading, 1974.
\bibitem{BY}{Bertoin, J. and Yor, M.:} {\em Exponential functionals of L\'evy processes}, Probability Surveys, Vol. 2, 191--212 (2005).
\bibitem{A}{Apostol, T.M.:} {\em Mathematical Analysis, 2nd ed.}, Addison Wesley Longman , Reading, 1974.
\bibitem{BS}{Borodin, A.  and Salminen, P.:} {\em Handbook of Brownian motion - Facts and Formulae, 2nd ed., Corrected printing}, Birkh\"{a}user Verlag, Basel-Boston-Berlin, 2015.
\bibitem{CPY}{Carmona, P., Petit, F.  and  Yor, M.:} {\em On the distribution and asymptotic results for exponential functionals of Levy processes}, in "Exponential functionals and principal values related to Brownian motion; a collection of research papers", ed. M. Yor,  Biblioteca de la Revista Matematica IberoAmericana (1997).
\bibitem{D}{Dufresne, D.:} {\em The distribution of a perpetuity, with applications to risk theory and pension funding.} Scand. Actuarial J., Vol. 1-2, 39--79, (1990).
\bibitem{ELP}{Epifani, I., Lijoi, A.  and Pr\"unster, I.}: {\em Exponential functionals and means of neutral-to-right priors,} Biometrika, Vol. 90(4), 791--808 (2003). 
\bibitem{IMcK} {It\^o, K. and McKean, H.P.:} {\em  Diffusion Processes and Their Sample Paths},
Springer Verlag, Berlin, Heidelberg, 1974.
\bibitem{SV}{Salminen, P. and Vostrikova, L.:}  {\em On exponential functionals of processes with independent increments}, Probab. Theory  Appl. 63-2, 330-357, (2018). 
\bibitem{SY}{Salminen, P. and  Yor, M.}: {\em Perpetual integral functionals as hitting and occupation times}, Electronic Journal of Probability, Vol. 10, Issue 11, 371-419, (2005).
\bibitem{Sa}{Sato, K.:} {\em L\' evy Processes and Infinitely Divisible Distributions, 2nd ed.}, Cambridge University Press., 2013.
\bibitem{U}{K. Urbanik}: {\em Functionals of transient stochastic processes with independent increments,} Studia Math.,   Vol. 103(3), 299-315, (1992). 
\bibitem{Y}{Yor, M.:}{\em On some exponential functionals of {B}rownian motion}, Adv. Appl. Probab., Vol. 24, 509-531 (1992).

\end{thebibliography}

\end{document}